\def\reals{\mathbb{R}}
\newtheorem{definition}{Definition}
\newtheorem{theorem}{Theorem}
\newtheorem{proposition}{Proposition}
\newtheorem{question}{Question}
\newtheorem{corollary}{Corollary}
\title{Why is Helfenstein's claim about equichordal points false?}
\author{Marek Rychlik\\
  University of Arizona\\
  Department of Mathematics, 617 N Santa Rita Rd, P.O. Box 210089\\
  Tucson, AZ 85721-0089, USA\\}
\date{\today}
\begin{document}
\maketitle
\begin{abstract}
  This article explains why a paper by Heinz G. Helfenstein entitled
  ``Ovals with equichordal points'', published in J.~London
  Math.~Soc.~31, 54--57, 1956, is incorrect. We point out a
  computational error which renders his conclusions invalid. More
  importantly, we explain that the method cannot be used to solve the
  equichordal point problem with the method presented there.  Today,
  there is a solution to the problem: Marek R. Rychlik, ``A complete
  solution to the equichordal point problem of Fujiwara, Blaschke,
  Rothe and Weizenb\"ock'', Inventiones Mathematicae 129 (1),
  141--212, 1997. However, some mathematicians still point to
  Helfenstein's paper as a plausible path to a simpler solution. We
  show that Helfenstein's method cannot be salvaged. The fact that
  Helfenstein's argument is not correct was known to Wirsing, but he
  did not explicitly point out the error. This article points out the
  error and the reasons for the failure of Helfenstein's approach in
  an accessible, and hopefully enjoyable way.
\end{abstract}
\section{The Equichordal Point Problem}
The equichordal point problem enjoyed significant popularity since its
original formulation by Fujiwara in 1916 and Blaschke, R\"othe and
Weizenb\"ock in 1917, because it can be formulated in easy to
understand terms of elementary geometry, and it is hard to solve.  The
starting point is the definition of an equichordal point:
\begin{definition}
  Let \(C\) be a Jordan curve and let \(O\) be a
  point inside it. This point is called \emph{equichordal}
  if every chord of \(C\) through this point has the same length.
\end{definition}
Then the equichordal point problem may be formulated
as follows:
\begin{question}
  Is there a curve with two equichordal points?
\end{question}
Why two? Because circles and a lot of other shapes have one equichordal
point, and because Fujiwara pointed out that it is impossible for a
shape to have three equichordal points.

The full solution to the problem appears in the article
\cite{Rychlik97}. The paper is considered (and it is!) rather hard to
read and its length is 72 pages. Thus, although it is a great resource
for anyone studying this and related problems, it is not always easy
to extract the information one needs.

In this set of notes we use the information provided in
\cite{Rychlik97} to construct a counterexample to a published article
by Helfenstein \cite{Helfenstein56} in 1956.  Helfenstein made a claim
which would lead to a simple solution of the equichordal point problem
(under 10 pages, perhaps) if it is augmented with a few relatively
easy facts to prove.  It has been hope of many that such a simple
proof exists. However, as we will see, Helfenstein's paper is
incorrect, and thus there is no hope for a simple proof, at least
along the lines of Helfenstein's argument.

The convex geometry community, in which the equichordal point problem,
and our solution of it, are quite well known.  The community has had
an especially hard time coming to grips with the hard analytical
methods used in our article (and also prior articles of Wirsing
\cite{Wirsing58} and Sh\"afke and Volkmer \cite{ShafkeVolkmer92}).  We
found on several occasions that the argument of Helfenstein continues
to have some legitimacy because no one has explicitly shown where it
is incorrect \cite{KleeRevision60}.  At the end of this paper we cite
Gr\"unbaums's argument from \cite{KleeRevision60} which is
representative of this opinion, although the experts on the problem
(including Wirsing) have clearly dismissed Helfenstein's
paper. Therefore, it will be beneficial to analyze Helfenstein's
argument from today's perspective, and explain why it is incorrect. We
hope that the reading is entertaining and allows one to understand
some of the trappings of the problem, and perhaps even appreciate the
length and complexity of our solution.

Helfenstein's paper contains an incorrect statement which must have
resulted from an error in a mundane calculation, involving only
elementary calculus. This will be clear from what follows. With the
aid of a Computer Algebra System (CAS), we reconstructed and corrected
the intermediate calculations, and arrived at the opposite conclusion,
which clearly shows an error in Helfenstein's argument in an
elementary way.

More importantly, the main idea of Helfenstein's paper is also
incorrect, and it cannot be salvaged by simply correcting the error in
calculation he made. We show this in the strongest possible way: we
construct a counterexample by referring to the relevant portions
of \cite{Rychlik97}. However, we will make the argument as simple
and as self-contained as possible.
\section{A summary of Helfenstein's paper}
In 1954 Heinz Helfenstein submitted an article \cite{Helfenstein56},
in which he claims that there is no oval with 2 equichordal points
that is 6 times differentiable. He calls a curve with 2 equichordal
points a \emph{2e-curve}. We will use this abbreviated term below, as
synonymous with ``curve with 2 equichordal points''.

We proceed to summarize the terminology, technique and results of
Helfenstein's paper. For simplicity, we will assume that the curve
\(C\) under consideration is a convex oval and it is symmetric in
various ways (cf. Figure~\ref{figure:points_and_lines}):
\begin{enumerate}
\item
  It is symmetric with respect to a point \(O\)
  inside \(C\).
\item
  Let \(R\) and \(S\) be the two hypothetical equichordal
  points. We may assume that \(O\) bisects the interval \(RS\).
\item 
  \(C\) is symmetric with respect to the straight line passing
  through \(R\) and \(S\).
\item 
  Let \(A\) and \(B\) be the two points collinear with \(R\) and
  \(S\) which belong to \(C\).  We may assume that the distance
  \(|AB|=1\).  Moreover, due to prior symmetries, \(O\) bisects
  \(AB\).
\item 
  The distance \(BS\) is called \(c\). By the symmetries
  assumed, we have \( AR=c \).
\item
  The construction is valid when \( 0 < c < 1 \).
  The order of the points \(A\), \(R\), \(O\), \(S\) and
  \(B\) on the straight line on which all these points lie is:
  \begin{enumerate}
  \item \( A, R, O, S, B\) if \(0 < c < 1/2\);
  \item \( A, S, O, R, B \) if \(1/2 < c < 1\).
  \end{enumerate}
  For \(c=1/2\), \(R=O=S\).
\end{enumerate}

\begin{figure}
  \begin{center}
    \includegraphics[width=.8\textwidth]{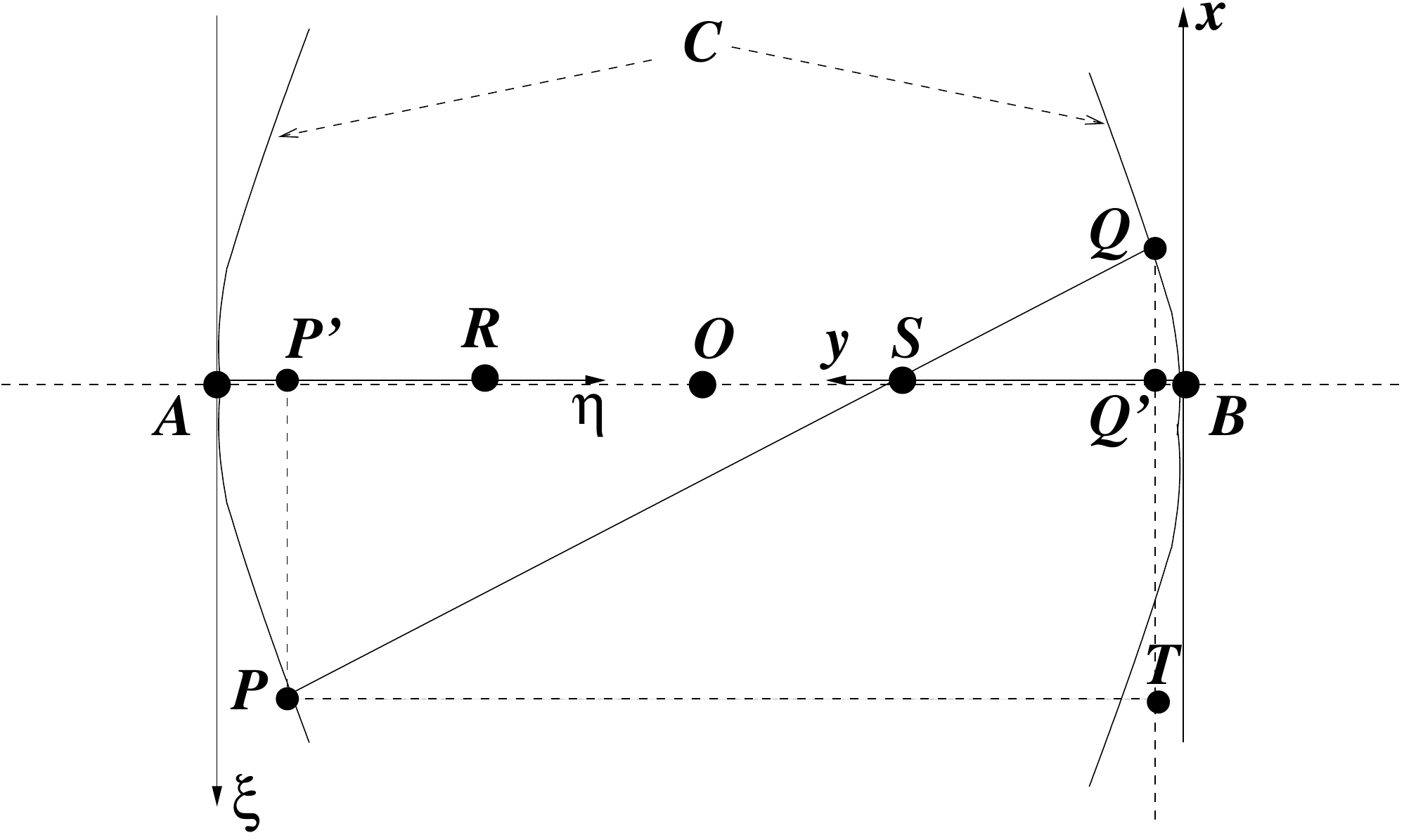}
  \end{center}
  \caption{\label{figure:points_and_lines} Illustration of Helfenstein's notation.}
\end{figure}
It should be stated that the above picture of a hypothetical
2e-curve is correct, based on many independent analyses. In
particular, the symmetries are well established.

The next assumption used in the paper \cite{Helfenstein56} is that
locally near \(A\) and \(B\) the curve \(C\) may be represented by a
graph of a function. Helfenstein uses two orthogonal coordinate
systems, one centered at \(A\) and one centered at \(B\). The
coordinates of the system centered at \(A\) are called \(\xi\) and
\(\eta\), and the positive direction of the \(\eta\) axis is
\(AB\). The coordinates of the system centered at \(B\) are called
\(x\) and \(y\), and the positive direction of the \(y\) axis is
\(BA\), so that the \(y\) and \(\eta\) axes point in the opposite
directions.  We assume that \(C\) near the points \(B\) and \(A\) may
be represented by the equations: \( y = f(x) \) and \( \eta = f(\xi)
\), respectively.  There is an agreement of results supporting the
claim that \(C\) is represented by a smooth function \(f(x)\) near the
points \(A\) and \(B\). It should be emphasized that \(f(x)\) is
\emph{locally defined}, i.e.  its domain is some interval
\((-\epsilon,\epsilon)\), where \(\epsilon>0\). There is a proof that
\(\epsilon\) may be as large as \(1/2\), but this will not be material
in these notes. One can also prove that \(f(x)\) is
\emph{real-analytic}, i.e. it may be represented by a power series
convergent on the interval \(|x|<\epsilon\). Again, the analyticity is
not material in these notes, but Helfenstein assumes that the function
has six derivatives. It should be noted that Helfenstein's assumption
in regard to differentiablility is faulty. The fact that he assumes
six-fold differentiability is a result of a computational error, as
will be demonstrated below.

The next important construction in Helfenstein's paper is that of a
functional equation satisfied by \(f(x)\). The derivation presented in
the Helfenstein's paper is correct, and is consistent with the
construction used in our solution of the equichordal point problem
\cite{Rychlik97}.  It nicely illustrates the transition from geometry
to analysis, which is a hallmark feature of the equichordal point
problem. We will repeat the Helfenstein's construction here.

Let \(P(\xi,\eta)\) be a point near \(A\) and let \(Q(x,y)\) be a
point near \(B\), both on the curve \(C\) and both represented in
the respective coordinate systems. Let \(P'\) and \(Q'\) be the
orthogonal projections of \(P\) and \(Q\) onto the line
\(AB\). Let \(T\) be the projection of \(P\) onto the line
\(QQ'\), perpendicular to \(AB\). Helfenstein observes that the
triangles \(QQ'S\) and \(QTP\) are similar. From this observation,
the following equations result:
\begin{eqnarray*}
\frac{x}{\sqrt{x^2+(c-y)^2}} &=& \frac{x+\xi}{1},\\
\frac{c-y}{\sqrt{x^2+(c-y)^2}} &=& \frac{1-y-\eta}{1}.
\end{eqnarray*}
By solving with respect to \(\xi\) and \(\eta\) we
obtain:
\begin{eqnarray}
\label{equation:helfenstein-xi}
\xi  &=& \frac{x}{\sqrt{x^2+(c-y)^2}} - x\\
\label{equation:helfenstein-eta}
\eta &=& \frac{-(c-y)}{\sqrt{x^2+(c-y)^2}} + (1-y).
\end{eqnarray}
By construction, \(y=f(x)\) and \(\eta = f(\xi)\).
Therefore, we obtain this functional equation:
\begin{equation}
\label{equation:functional}
f\left(\frac{x}{\sqrt{x^2+(c-f(x))^2}} - x\right)
=\frac{-(c-f(x))}{\sqrt{x^2+(c-f(x))^2}} + (1-f(x)).
\end{equation}
The manner in which Helfenstein uses this equation is also
well established: we repeatedly differentiate both sides
at \(x=0\) to obtain the consecutive derivatives
of \(f(x)\) at \(x=0\). Thus, we try to solve the equation
by finding the \(f(x)\)'s Taylor series at \(x=0\).
Helfenstein uses the following
notation, defining coefficients of the Taylor expansion at \(x=0\):
for \(n=0,1,2,\ldots\):
\begin{equation}
  \label{equation:coefficients}
  \frac{d^nf(x)}{dx}\bigg|_{x=0}=n!\,a_n.
\end{equation}
Often we will write \(a_n(c)\) instead of \(a_n\) when it is necessary
to consider the dependence of \(a_n\) upon the parameter \(c\).

The consensus of several methods is that these equations can be used
to determine \(a_n\) by a recurrence relation, and thus determine the
Taylor expansion of \(f(x)\) at \(x=0\) up to an arbitrary
order. Moreover, the symmetries imply that \(f(x)\) is an even
function:
\[f(-x)=f(x).\]
This implies that \(a_{n}=0\) for odd \(n\).
Moreover, \(f(0)=0\) follows from the assumptions made,
that \(C\) passes through \(A\) and \(B\).

We come to a point where Helfenstein makes a calculation error in
calculating the third non-trivial coefficient, \(a_6\).  Only simple
calculus (chain rule) is involved. Calculating \(a_2\) and \(a_4\) by
hand would test anyone's patience, but today is conveniently done with
the aid of a Computer Algebra System (CAS).  Helfenstein calculated
\(a_2\) and \(a_4\) correctly.  Calculation of \(a_6\) must have been
very challenging without a CAS, and indeed it resulted in an important
error which affects the entire argument.

We wrote a simple CAS program which determines the coefficients \(a_n\)
for \(n\) up to \(10\).  In theory, the program can find \(a_n\) for
arbitrarily large \(n\), but even CAS consumes an amount of time that
probably grows exponentially with \(n\).  The CAS we used is the open
source, free system Maxima \cite{Maxima}, although any other CAS can
solve this problem. We included our program as Appendix~A.

The results are presented below for even \(n\) only. Moreover, we list
numbers
\begin{equation}
  b_n = a_n\left(\frac{1}{2}+\frac{\sqrt{z}}{2}\right),
\end{equation}
which are analytic in \(z\) iff \(a_n\) are invariant under the
substitution \(c\mapsto 1-c\). Thus, it is much easier to read off the
invariance by looking at \(b_n\).

The program generated the coefficients in standard \TeX\ format.
Additionally, the program factored \(a_n\) as rational functions of \(c\),
for easy comparison of \(a_2\) and \(a_4\) with Helfenstein's paper.
Here is the result:
\begin{eqnarray*}
a_{2}&=&{{1}\over{2\,\left(2\,c^2-2\,c+1\right)}}\\
a_{4}&=&-{{12\,c^4-24\,c^3+12\,c^2-1}\over{8\,\left(2\,c^2-2\,c+1
 \right)^2\,\left(2\,c^4-4\,c^3+6\,c^2-4\,c+1\right)}}\\
a_{6}&=&{{80\,c^{10}-400\,c^9+680\,c^8-320\,c^7-500\,c^6+940\,c^5-712
 \,c^4+284\,c^3-52\,c^2+1}\over{16\,\left(2\,c^2-2\,c+1\right)^4\,
 \left(c^4-2\,c^3+5\,c^2-4\,c+1\right)\,\left(2\,c^4-4\,c^3+6\,c^2-4
 \,c+1\right)}}
\end{eqnarray*}
and
\begin{eqnarray*}
b_{2}&=&{{1}\over{z+1}}\\
b_{4}&=&-{{3\,z^2-6\,z-1}\over{z^4+8\,z^3+14\,z^2+8\,z+1}}\\
b_{6}&=&{{10\,z^5-110\,z^4-20\,z^3+204\,z^2+42\,z+2}\over{z^8+24\,z^7
 +172\,z^6+488\,z^5+678\,z^4+488\,z^3+172\,z^2+24\,z+1}}\\
\end{eqnarray*}
On the web we listed the coefficients up to \(a_{10}\)
without folding or breaking them up. The corresponding \(b_{n}\)
are clearly analytic, i.e. do not contain half-integer powers
of \(z\). This means that \(a_{n}\) is invariant under the
substitution \(c\mapsto 1-c\) for \(n\) up to 10. We can push
this calculation further, up to, say, \(n=20\), always with
the same result: it is invariant under this substitution.

Helfenstein's paper contains correct expressions for \(a_2\) and
\(a_4\). However, he did not include the expression for
\(a_6\). Since he derived a false conclusion about \(a_6\), as we
will see below the only possible explanation is that he made a
computational error in the intermediate calculations.
Helfenstein's argument is founded on an unproven claim that
if a 2e-curve exists for some value \(c\) then \(a_n\) must be
invariant under the substitution \(c\mapsto 1-c\). More precisely,
if we consider \(a_n=a_n(c)\) (i.e. as a function of \(c\)) then
the condition \(a_n(c)=a_n(1-c)\) is necessary (according to
Helfenstein) for a 2e-curve to exist for a particular value of
\(c\).

Finally, we are ready to explain Helfenstein's main argument, and how
he arrived at the erroneous six-fold differentiability condition.  He
correctly noted that the expressions for \(a_2\) and \(a_4\) are
invariant under the substitution \(c\mapsto 1-c\). He then considers
\(a_6\) as a candidate for a coefficient which is not invariant under
this substitution. In contradiction with our findings, he claims that
it is not invariant under the substitution \(c\mapsto
1-c\). Helfenstein writes:
\begin{quote}
  A sixth differentiation finally yields an expression for
  \(a_6(c)\) which is not identical to \(a_6(1-c)\).
\end{quote}
The form of \(a_6\) is omitted in Helfenstein's paper and the
intermediate calculations of it are missing. He then proceeds to
determine specific values of \(c\) which solve the equation:
\[ a_6(c) = a_6(1-c) \]
He claims that the above equation is equivalent
to a certain polynomial equation of the 9-th degree:
\[ 144\,c^9-648\,c^8+1176\,c^7-1092\,c^6+168\,c^5+798\,c^4-846\,c^3+357\,c^2-59\,c+1=0. \]
Subsequently, he demonstrates that equation does not
have roots in the range
\[ \frac{2-\sqrt{3}}{4} < c < \frac{2+\sqrt{3}}{4} \]
which is known to be the region of \(c\), outside of which there is no
2e-curve, based on elementary arguments which preceded Helfenstein's
paper. Clearly, the 9-th degree polynomial and the subsequent
conclusions are a result of a calculation error.

Helfenstein's claim is that there are no 2e-curves which are six-fold
differentiable. The reason is that he needs this much
differentiability to calculate \(a_6\).  As we have shown, the
six-fold differentiability of the function \(f(x)\) at \(x=0\) is not
sufficient to disprove the existence of a 2e-curve.  Moreover, we
verified with a CAS that Helfenstein's argument fails for curves which
are ten-fold differentiable, on the basis of our calculations of
\(a_n\) and \(b_n\) for \(n\) up to 10.

Of course, now it is time to pause, and suggest that the argument
fails for all \(n\). This shows that Helfenstein's approach cannot
succeed, even if we had unlimited computing power and find as many
coefficients \(a_n\) as necessary.
\section{More differentiability does not help}
One could hope that by finding more derivatives of \(f(x)\) we
will eventually find a coefficient \(a_n(c)\) for which \(a_n(c)\)
and \(a_n(1-c)\) do not coincide.  In this section we will
show that \(a_n(c)=a_n(1-c)\) for all \(n\), given
that all derivatives up to \(n\) exist. This, of course,
demonstrates that Helfenstein's method cannot disprove
the existence of a 2e-curve.

The key point is to understand the \emph{local} existence and
uniqueness of solutions of Helfenstein's functional equation.  It
should be noted that his paper is an attempt to disprove local
existence. What he missed is the fact that the \emph{locally
  defined} solution to his functional equation does exist!
Furthermore, he missed that the local existence does not imply
that a 2-e curve exists.

The existence and uniqueness results are contained in the
\emph{Inventiones} article \cite{Rychlik97}, but we will formulate
those results here in a manner more suitable for these notes.

The graph of a solution to the Helfenstein's functional equation
gives rise to two Jordan arcs, \(C_A\) and \(C_B\), contained in a
neighborhoods of \(A\) and \(B\) respectively.
The arcs \(C_A\) and \(C_B\) are defined by the equations
\begin{eqnarray*}
\eta&=&f(\xi),\\
y&=&f(x)
\end{eqnarray*}
in the respective coordinate system.
\begin{figure}
  \begin{center}
    \includegraphics[width=.6\textwidth]{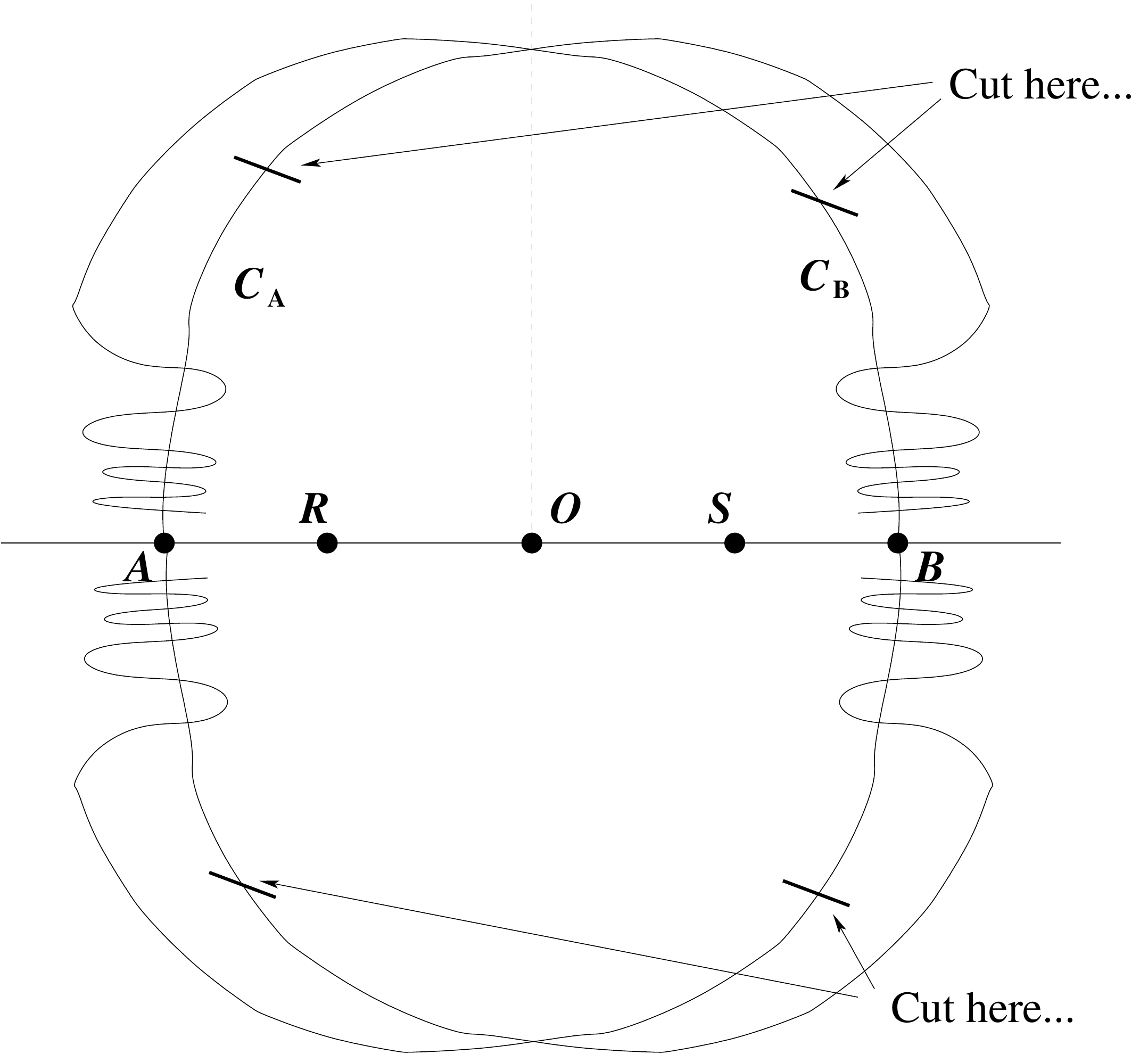}
  \end{center}
  \caption{\label{figure:homoclinic}A schematic figure of local curves connecting.}
\end{figure}
A picture is worth a thousand words. Thus, if the reader still cannot
imagine how the two arcs \(C_A\) and \(C_B\) may connect together when
they are maximally extended, without forming a 2e-curve, a plausible
scenario can be visualized by a schematic drawing in
Figure~\ref{figure:homoclinic}. The alternative would be for the two
arcs to meet exactly and form one smooth curve. The fact that they do
not meet in this manner is the thrust of the \emph{Inventiones}
article \cite{Rychlik97}. It should be noted that the figure has
perfect reflectional symmetry, both with respect to reflections in the
line \(AB\) and in the center point \(O\).  Because Helfenstein did
not look away from the line \(AB\) (i.e. outside the "cuts", which
mark the ends of Jordan arcs \(C_A\) and \(C_B\)) he failed to notice
that his assumptions may be satisfied by a {\em local equichordal
configuration}. And indeed, this is what really happens.  While the
above figure is only a schematic, we and others performed numerical
computations which are in perfect agreement with the above picture in
regard to its topology. It should be noted that the oscillations by
the curves near \(A\) and \(B\) continue up to the line \(AB\). The
size of the oscillations settles down to a fixed, positive amplitude.

The extremely important idea in understanding the equichordal point
problem has been that the problem should be phrased as a problem bout
iterations of mappings, i.e. should be framed as a problem of {\em
  dynamical systems theory}. To remain faithful to Helfenstein's
notation, we define a map \(G_c:U_c \to \reals^2\), where \(U_c \subset
\reals^3\) is an open, punctured unit disk centered at \(S(0,c)\):
\[ U_c = \left\{(x,y)\in\reals^2\,:\, 0<x^2+(y-c)^2<1 \right\} \]
The map \(G_c\) is defined by the formula \(G_c(x,y) = (\xi,\eta) \) where
\(\xi\) and \(\eta\) are given by
equations~\ref{equation:helfenstein-xi}-\ref{equation:helfenstein-eta}.
For better understanding, one should consider \(c\) a parameter, and
think of the mapping \(G:U\to\reals^2\) defined by
\(G(\xi,\eta,c)=G_c(x,y)\), where \(U\subset\reals^3\):
\[ U = \left\{(x,y,c)\in\reals^3\,:\, 0<c<1,\;0<x^2+(y-c)^2<1 \right\} \]
Occasionally, there is a technical advantage to including \(c\) in the
set of variables, for instance, when stating joint continuity,
differentiability, etc. which includes the parameter.

Geometrically, \(G_c\) acts on a point \(Q(x,y)\) in an an almost
obvious way. The preliminary idea is to map it to the point \(Q(x,y)\) to
the point \(P(\xi,\eta)\), where \(\xi\) and \(\eta\) are computed
from
equations~\ref{equation:helfenstein-xi}-\ref{equation:helfenstein-eta}. However,
this point is subsequently identified with a point
\(Q_1(x',y')\) where {\em numerically} \(x'=\xi\) and
\(y'=\eta\). Thus, the action of \(G_c\) on a point \(Q\) is described
as a composition of two maps (``walks''):
\[ Q\mapsto P\mapsto Q_1 \]
where the two walks may be described as fallows:
\begin{enumerate}
\item We start at \(Q\), and walk towards \(S\) and pass it, until we
  have walked a total distance of 1;
\item We walk from \(P\) towards \(O\) and pass it, until we reach
  \(Q_1\), and satisfy the distance condition \(|QO| = |OQ_1|\).
\end{enumerate}
In short:
\[ G_c(Q) = Q_1\]
For every \(c\in(0,1)\) the domain of \(G_c\) is the open disk about
\(S\) of radius 1, without its center (a punctured disk). The reader
should note that according to our conventions \(A\) and \(S\) like on
the same side of \(O\) when \(c<1/2\) and on the opposite sides when
\(c>1/2\). Although it is possible to enlarge the domain with
some additional conventions, we will not do so, and adhere to
the ``natural'' domain described above.

The role of the substitution \(c\mapsto 1-c\) is explained in the
following
\begin{proposition}
  \label{proposition:inverse}
  For an arbitrary \(c\in(0,1)\), Let \(Q\) be in the domain of
  \(G_c\) and le \(Q_1=G_c(Q)\). Then \(G_{1-c}\) is well defined at
  \(Q_1\) and
  \[G_{1-c}(Q_1)=Q.\] 
  In particular, the mappings \(G_c\) and \(G_{1-c}\) are inverses of
  each other and \(G_c(U_{c})=U_{1-c}\). In addition the mapping
  \(G_c:U_c\to U_{1-c}\) is a diffeomorphism.
\end{proposition}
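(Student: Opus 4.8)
The plan is to avoid the grind of verifying $G_{1-c}\circ G_c=\mathrm{id}$ by brute-force substitution into the rational expressions, and instead to decompose $G_c$ into two elementary geometric maps whose interaction with the substitution $c\mapsto 1-c$ is transparent. Working throughout in the $B$-coordinate system, I would write $S=(0,c)$, $O=(0,\tfrac12)$, and observe that $R=(0,1-c)=\sigma_O(S)$, where $\sigma_O$ denotes the central reflection $(u,v)\mapsto(-u,1-v)$ through $O$. Let $\tau_c$ be the ``chord map through $S$'': $\tau_c(Q)$ is the point reached by walking from $Q$ toward $S$ a total distance $1$. Reading off equations~(\ref{equation:helfenstein-xi})--(\ref{equation:helfenstein-eta}) together with the identification ``$x'=\xi,\ y'=\eta$'', one checks directly that the intermediate point $P$ equals $\tau_c(Q)$ in $B$-coordinates, and that the final identification step is exactly the reflection $\sigma_O$. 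Thus $G_c=\sigma_O\circ\tau_c$ on $U_c$.

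The two facts that make everything collapse are then: (i) $\tau_c$ is an involution precisely on the punctured unit disk $U_c$; and (ii) conjugation by $\sigma_O$ converts the chord map through $S$ into the chord map through $R$. For (i), setting $d=|Q-S|$, a short computation gives $\tau_c(\tau_c(Q))-Q=\tfrac{S-Q}{d}\bigl(1+\operatorname{sign}(d-1)\bigr)$, which vanishes iff $d<1$ — exactly the condition defining $U_c$ — and along the way yields $|\tau_c(Q)-S|=1-d\in(0,1)$, so the image again lands in the relevant punctured disk. For (ii), since $\sigma_O$ is an isometry carrying $S$ to $R=(0,1-c)$ and mapping lines to lines, one has $\sigma_O\circ\tau_c\circ\sigma_O=\tau_{1-c}$.

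Combining (i) and (ii) finishes the argument without any calculation involving the explicit formulas:
\begin{align*}
G_{1-c}\circ G_c
&=(\sigma_O\circ\tau_{1-c})\circ(\sigma_O\circ\tau_c)
=\bigl(\sigma_O\circ\tau_{1-c}\circ\sigma_O\bigr)\circ\tau_c
=\tau_c\circ\tau_c=\mathrm{id}_{U_c},
\end{align*}
and symmetrically $G_c\circ G_{1-c}=\mathrm{id}_{U_{1-c}}$; this yields both $G_{1-c}(Q_1)=Q$ and the mutual-inverse statement. The image identity $G_c(U_c)=U_{1-c}$ drops out of the distance bound $|Q_1-R|=|\sigma_O(P)-\sigma_O(S)|=|P-S|=1-d\in(0,1)$ established in (i). Finally, $G_c$ is real-analytic on $U_c$ because its only denominator, $\sqrt{x^2+(c-y)^2}$, is smooth and nonvanishing away from the center $S$, and its inverse $G_{1-c}$ is of the same form; hence $G_c:U_c\to U_{1-c}$ is a diffeomorphism.

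I expect the main obstacle to be purely bookkeeping rather than conceptual: pinning down the orientations of the $\xi$- and $\eta$-axes relative to the $B$-frame so that the identification step is genuinely the reflection $\sigma_O$ (and not, say, a reflection across $AB$ composed with it), and confirming that the ``walk toward $S$ and pass it'' convention produces an involution on exactly the open unit disk rather than on some larger or smaller set. Once the decomposition $G_c=\sigma_O\circ\tau_c$ and the involutivity of $\tau_c$ on $U_c$ are nailed down, the proposition follows immediately.
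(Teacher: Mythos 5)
Your proof is correct, and it takes a genuinely different route from the paper's, although both avoid brute-force manipulation of the rational formulas. The paper argues synthetically: with \(P=\tau_c(Q)\) and \(Q_1=\sigma_O(P)\) (in your notation), it introduces the auxiliary point \(P_1=\sigma_O(Q)\) and observes that the quadrilateral \(QPP_1Q_1\) is a parallelogram because both of its diagonals, \(QP_1\) and \(PQ_1\), are bisected by \(O\); hence \(|P_1Q_1|=|PQ|=1\), and since \(S\) lies on \(PQ\) and \(O\) bisects \(RS\), the point \(R\) lies on \(P_1Q_1\). That verifies directly that the two walks defining \(G_{1-c}\) at \(Q_1\) are \(Q_1\mapsto P_1\mapsto Q\), which is the claimed identity. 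You replace the parallelogram by the factorization \(G_c=\sigma_O\circ\tau_c\) plus two lemmas, \(\tau_c\circ\tau_c=\mathrm{id}\) on \(U_c\) and \(\sigma_O\circ\tau_c\circ\sigma_O=\tau_{1-c}\); the conjugation lemma encodes exactly the same geometric fact as the parallelogram (reflection in \(O\) carries the chord through \(S\) to the chord through \(R\)), but your packaging is more modular. What it buys: the inverse relation becomes a one-line algebraic collapse, and the distance bookkeeping \(|\tau_c(Q)-S|=1-d\) hands you, with no extra work, the image identity \(G_c(U_c)=U_{1-c}\) and the diffeomorphism statement, which the paper asserts but does not spell out. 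What the paper's version buys is that it is entirely coordinate-free, needing no vector formulas at all. Your flagged worry about the identification step resolves favorably: in \(B\)-coordinates \(\tau_c(Q)=Q+(S-Q)/d\) and \(\sigma_O(u,v)=(-u,1-v)\), and their composite reproduces equations~\ref{equation:helfenstein-xi}--\ref{equation:helfenstein-eta} exactly, so the identification really is the central reflection through \(O\) and not a reflection in the line \(AB\) composed with it; the paper's ``two walks'' description of \(G_c\) asserts the same factorization (modulo an apparent typo, \(|QO|\) where \(|PO|\) is meant), so your decomposition is precisely the paper's definition made explicit.
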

\begin{proof}
  Let us consider point \(P(\xi,\eta)\) and another point,
  \(P_1(\tilde\xi,\tilde\eta)\) defined as the reflection of
  \(Q(x,y)\) through the point \(O\). We claim that
  \begin{enumerate}
  \item \(P_1\), \(R\) and \(Q_1\) are collinear;
  \item \(|P_1Q_1|=1\).
  \end{enumerate}
  Then the equation \(G_{1-c}(Q_1)=Q\) follows from the above claims and the definition
  of \(G\) in terms of walks. Indeed, the claims imply that the two walks
  defining \(G_{1-c}(Q_1)\) are:
  \[Q_1\mapsto P_1\mapsto Q.\]
  Both properties follow from the observation that the quadrilateral
  \(QPP_1Q_1\) is a parallelogram, Indeed, its two
  diagonals are bisected by \(O\). In particular, the side
  \(P_1Q_1\) is parallel to \(PQ\) which has length
  1. Therefore, both sides have length 1. Point \(S\) lies on the side
  \(PQ\) by definition. Thus, \(R\) lies on the opposite side
  \(P_1Q_1\) because \(O\) bisects \(RS\) by definition.
\end{proof}
This above statement and proof carefully avoid complicated algebraic
equations. An attempt to prove the above proposition by brute force
calculations is likely to fail. For a reader wanting to understand an
algebraic approach to the above lemma, we included a CAS program
Appendix~C which arithmetically verifies the claims in the above
proof.

The significance of Proposition~\ref{proposition:inverse} in the
context of Helfenstein's paper: the invariant sets of \(G_c\) and
\(G_{c}^{-1}=G_{1-c}\) are identical. It should also be noted that the
following two conditions are equivalent for a fixed \(c\):
\begin{enumerate}
\item A function \(y=f(x)\) satisfies the functional equation~\ref{equation:functional};
\item The graph \(C_B=\{(x,f_c(x))\}\) is an invariant set of \(G_c\).
\end{enumerate}
The invariance should be understood locally in the neighborhood of
\(B\) or near \(x=0\). Local invariance of a Jordan arc \(C_B\) means
that \(C_B\) is contained in the domain of \(G_c\). there is a
neighborhood \(K\) of \(B\) such that \(C_B\cap K = G_c(C_B)\cap K\).

The uniqueness of the solutions easily implies that Helfenstein's
method cannot work. Let us denote by \(f_c(x)\) any solution of the
functional equation~\ref{equation:functional}, defined in some
neighborhood of \(x=0\). If we know that the solution is unique then
\(y=f_c(x)\) is a solution of the functional equation for both \(c\)
and \(c'=1-c\). Uniqueness implies \(f_{c}(x) = f_{1-c}(x)\)
and equality \(a_n(c)=a_n(1-c)\) follows for all \(n\).
\section{Local existence and uniqueness}
It turns out that a properly formulated existence and uniqueness
theorem eliminates Helfenstein's approach as viable, but it also
eliminates the possibility that a continuous 2e-curve exists. The key
to such a strong result is a consideration of solutions to the
functional equation~\ref{equation:functional} which do not satisfy
\(f(0)=0\), but instead \(f(0)=y_0\), where \(y_0\) varies in the
range \(|y_0|<\min(c,1-c)\). Such a family provides a good coordinate
system near \(B\). Figure~\ref{figure:family} schematically depicts
the situation.  In its caption, the figure states several claims which
will be shown to reflect what has actually been proved about
the functional equation and its solutions.
\begin{figure}
  \begin{center}
    \includegraphics[width=.7\textwidth]{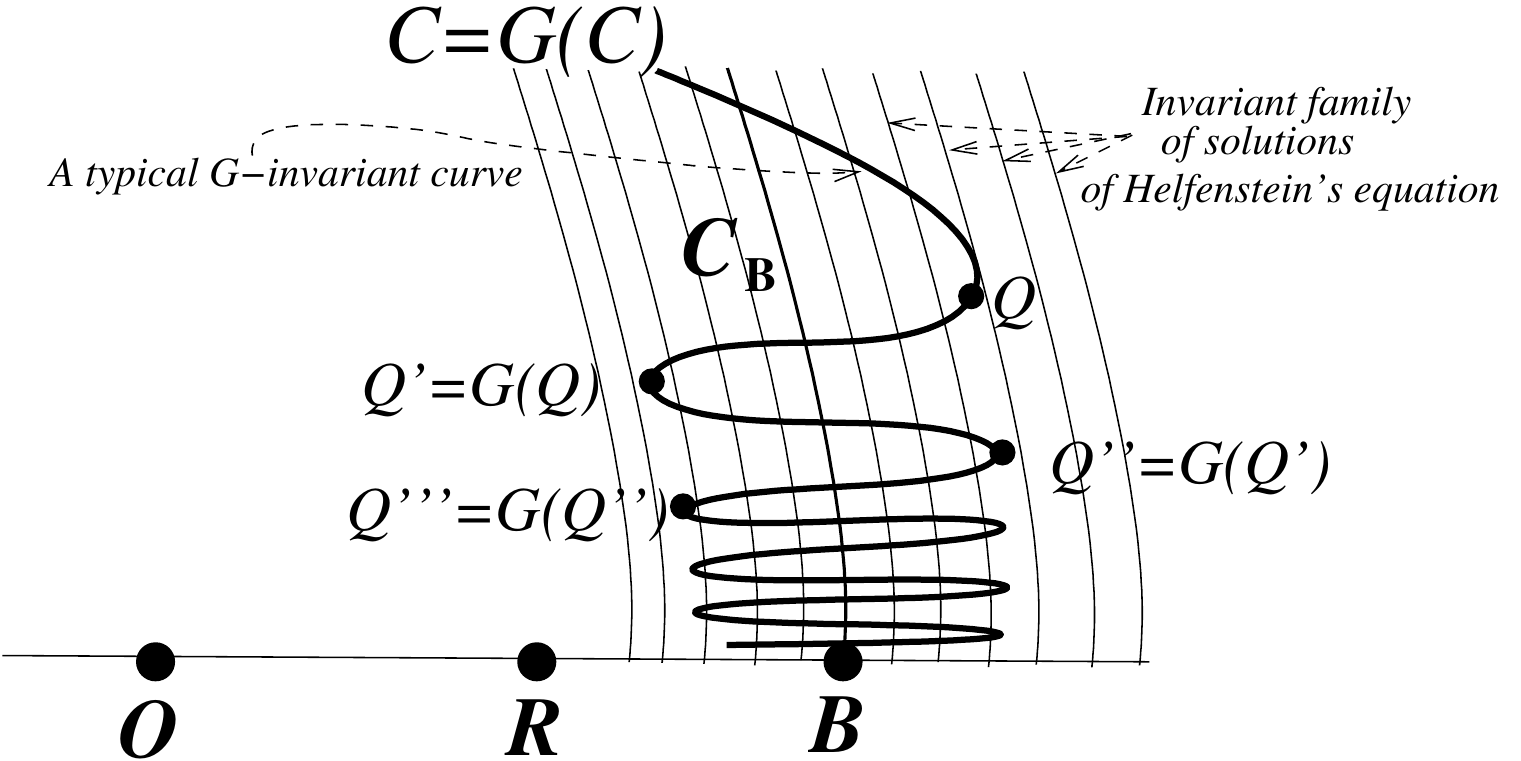}
  \end{center}
  \caption{\label{figure:family} The invariant family of curves
    \(y=F(x,y_0,c)\) near \(B\) for the map \(G=G_c\). We assume that
    \(1/2<c<1/2\), which implies that \(R\) is between \(O\) and
    \(B\).
    Only a half of each curve is drawn in the upper halfplane above
    the line \(SB\). A typical invariant curve \(C\) such that
    \(C=G_c(C)\) is depicted, as it enters the neighborhood of \(B\)
    foliated by curves of the family.
    A sample trajectory \(Q^{(i)}=Q,Q',Q'',Q''',\ldots\), where
    $i=0,1,2,\ldots$, under iteration of \(G\), is depicted. Unless
    even and odd subsequences \(Q^{(2i)}\) and \(Q^{(2i+1)}\), which
    must converge, both converge to the same point \(B\), the
    invariant curve \(C\) must oscillate between two points lying on
    the straight line \(RS\).}
\end{figure}

This section is mainly expository, as the proof can be extracted from
our {\em Inventiones} article \cite{Rychlik97}. The technique is
covered in the monograph of Hirsch, Pugh and Shub
\cite{HPS77}. Therefore, we walk the reader through the constructions
and provide some motivations leading up to the theorem, which we
formulate at the end.

Throughout this section we use the following set
\[V = \left\{(y_0,c)\in\reals^2\,:\,0<\left|c-\frac{1}{2}\right|<\frac{1}{2},\;|y_0|<\min(c,1-c)\right\}.\]
We shall consider a family of curves in \(\reals^3\),
\(\{\Gamma(y_0,c)\}\), locally represented as a graph \(y=F(x,y_0,c)\)
and passing through point \((0,y_0)\), i.e.
\[y_0=F_c(x,y_0).\]
When \(c\) is fixed, we use the notation \(F_c(x,y_0)\) instead of \(F(x,y_0,c)\)
and \(\Gamma_c(y_0)\) for the curve in \(\reals^2\) given by the equation \(y=F_c(x,y_0)\).
This emphasizes \(c\)'s role as a parameter.

We allow \((y_0,c)\in V\); equivalently, when \(c\) is fixed, we
require that \(|y_0|<\min(c,1-c)\). Moreover, we require the local
invariance condition: the curve \(\Gamma_c(y_0)\) is mapped by \(G_c\)
to another curve of the family. Which one? It is easy to verify that
\[ G_c(0,y) = (0,-y). \]
Thus, we know that the point \((0,-y_0)\) lies in the image of
\(\Gamma_c(c)\) and thus
\[G_c\left(\Gamma_c(y_0)\right) = \Gamma_c(-y_0).\]
locally in a neighborhood of \(x=0\). 

It can be seen that this invariance condition is equivalent to a functional
equation that the function \(F(x,y_0,c)\) must satisfy at least in a neighborhood
of the line \(x=0\):
\begin{equation}
  \label{equation:modified-functional}
  F\left(\frac{x}{\sqrt{x^2+(c-F(x,y_0,c))^2}} - x,-y_0,c\right)
  =\frac{-(c-F(x,y_0,c))}{\sqrt{x^2+(c-F(x,y_0,c))^2}} + (1-F(x,y_0,c)).
\end{equation}
Conceptually, both \(y_0\) and \(c\) are parameters in \(F\) and once we
fix them, we also consider the function \(F_{y_0,c}:J\to\reals\),
\(J\subseteq\reals\), given by:
\[ F_{y_0,c}(x) = F(x,y_0,c).\]
Also, conceptually, \(F\) is a 2-parameter family of ordinary
1-variable functions. We will use the resulting three notations for
\(F\) interchangeably. The domain \(J\) of \(F_{y_0,c}\) shall depend on the
parameters.  We will assume that \(J\) is an open interval, symmetric
about \(x=0\), and that its length varies continuously:
\[ J=J_\varphi(y_0,c) = \left\{x\in\reals\,:\,-\varphi(y_0,c) < x < \varphi(y_0,c) \right\}\]
where \(\varphi: V\to\reals \) is a certain positive, continuous function. Thus,
equivalently \(F:\mathcal{J}_\varphi\to\reals\) where \(\mathcal{J}_\varphi\subset\reals^3\)
is given by:
\[ \mathcal{J}_\varphi = \left\{(x,y_0,c)\in\reals^3\,:\, 0<\left|c-\frac{1}{2}\right|<\frac{1}{2},\;
|y_0|<\min(c,1-c),\; |x|<\varphi(y_0,c)\right\}. \]
For fixed \(c\), we will also consider the section of \(\mathcal{J}_\varphi\):
\[ \mathcal{J}_\varphi(c) = \left\{(x,y_0)\in\reals^2\,:\, |y_0|<\min(c,1-c),\;|x|<\varphi(y_0,c)\right\}. \]
Let
\[ H_c = \begin{cases}
  G_c     & \text{if \(c>1/2\),}\\
  G_c^{-1} & \text{if \(c<1/2\).}
\end{cases}\]
We recall that \(G_c^{-1}=G_{1-c}\). The reason for this definition is
that the theorem below is applicable either to \(G_c\) or \(G_c^{-1}\)
depending on whether \(c>1/2\) or \(c<1/2\). To maintain symmetry
of these two cases, we formulate the theorem for \(H_c\) instead of \(G_c\).
We recall that \(G_c^{-1}=G_{1-c}\) which means that it would
be sufficient to consider only the range \(c>1/2\).
\begin{theorem}
  \label{theorem:main}
  There exists a continuous, positive function
  \(\varphi:V\to\reals\) such that
  \begin{enumerate}
  \item The set \(\mathcal{J}_\varphi(c)\) is contained in the domain of \(H_c\).
  \item We have \(H_c\left(\mathcal{J}_\varphi(c)\right)\subseteq \mathcal{J}_\varphi(c)\).
  \item For every \((x,y)\in \mathcal{J}_\varphi(c)\) the limit
    \(\lim_{n\to\infty}H_c^n(x,y)\) exists and it lies in the set
    \[V_c=\{(x,y)\,:\, x=0, |y|<\min(c,1-c)\}.\]
  \item The mapping \(\pi_c:\mathcal{J}_\phi(c)\to V_c \) defined by
    \[\pi_c(x,y) = \lim_{n\to\infty}H_c^n(x,y)\]
    is real-analytic and it is a {\em fiber bundle}. In
    particular \(\pi_c(0,y)=(0,y)\); equivalently
    \(\pi_c|V_c = id_{V_c}\).
  \item The sets \(\Gamma_c(y_0)\) defined for all \(y_0\)
    s.t. \(|y_0|<\min(c,1-c)\) by the formula:
    \[\Gamma_c(y_0) = \pi_c^{-1}(0,y_0)=
    \left\{(x,y)\in \mathcal{J}_\varphi(c)\,:\, \lim_{n\to\infty}H_c^n(x,y)=(0,y_0)\right\} \]
    may also be equivalently defined by the equation
    \[ y = F(x,y_0,c) \]
    where \(F:\mathcal{J}_\varphi\to\reals\) is a unique, real-analytic function.
  \item For all \(y_0\) s.t. \(|y_0|<\min(c,1-c)\) we have
    \[H_c\left(\Gamma_c(y_0)\right)\subset \Gamma_c(-y_0).\]
    In particular,
    \[H_c^2\left(\Gamma_c(y_0)\right)\subset \Gamma_c(y_0)\]
    i.e. the curve \(\Gamma_c(y_0)\) is invariant under the action of
    \(H_c^2\). The curve \(\Gamma_c(0)\) is invariant under
    \(H_c\).
  \item We have the following
    decomposition:
    \[\mathcal{J}_\varphi(c) = \bigcup_{y_0\,:\,|y_0|<\min(c,1-c)} \Gamma_c(y_0).\]
  \item The convergence in the definition of \(\Gamma_c(y_0)\) is
    exponentially fast.  More precisely, there exist continuous
    functions \(\mu, D:V\to\reals\) such that \(0<\mu < 1\) and
    \(0<D<\infty\) and such that for all
    \((x,y)\in\mathcal{J}_\varphi(c)\) and all nonnegative integers
    \(n\):
    \[\|H_c^n(x,y)-(0,y_0)\|\leq D(y_0,c)\,\mu(y_0,c)^n.\]
  \item Each curve \(\Gamma_c(y_0)\) is tangent to the \(x\)-axis, i.e.
    it is normal to the line \(RS\). Alternatively, for every \((y_0,c)\in V\):
    \[\frac{\partial F(x,y_0,c)}{\partial x}\bigg|_{x=0} = 0. \]
  \end{enumerate}
\end{theorem}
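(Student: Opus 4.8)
The plan is to recognise the segment $V_c=\{(x,y)\,:\,x=0,\;|y|<\min(c,1-c)\}$ as a \emph{normally attracting invariant manifold} for $H_c$ and to obtain the family $\{\Gamma_c(y_0)\}$ as its strong-stable foliation, using the invariant-manifold theory of Hirsch, Pugh and Shub~\cite{HPS77}. The first step is the linearisation. Since $H_c(0,y)=(0,-y)$, the line $V_c$ is invariant and $H_c$ acts on it by the reflection $y\mapsto -y$, whose derivative has absolute value $1$, so the tangential behaviour is \emph{neutral}. Differentiating the explicit formulas~\ref{equation:helfenstein-xi}--\ref{equation:helfenstein-eta} at a point $(0,y_0)\in V_c$ and splitting $DH_c$ into its tangential and normal (i.e.\ $x$-direction) parts, I would verify that in the normal direction $H_c$ contracts by a factor $\mu(y_0,c)<1$. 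The definition $H_c=G_c$ for $c>1/2$ and $H_c=G_c^{-1}=G_{1-c}$ for $c<1/2$ is exactly what turns this transverse factor into a genuine contraction rather than an expansion. Because the tangential part is neutral while the normal part strictly contracts, $V_c$ is $r$-normally hyperbolic for every $r$, which is the hypothesis that the smooth stable-foliation theorem requires.

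With normal hyperbolicity established, the second step applies the parametrised stable-manifold and stable-foliation theorem of~\cite{HPS77}. For a sufficiently small, uniformly chosen width $\varphi(y_0,c)>0$ this yields the forward invariance $H_c(\mathcal{J}_\varphi(c))\subseteq\mathcal{J}_\varphi(c)$ and the containment of $\mathcal{J}_\varphi(c)$ in the domain of $H_c$ (items~1 and~2). The fibres $\Gamma_c(y_0)$ are the local strong-stable manifolds of the points $(0,y_0)$: each is a graph $y=F(x,y_0,c)$, they foliate $\mathcal{J}_\varphi(c)$ (item~7), and the holonomy projection onto the transversal $V_c$ is the map $\pi_c$, whose fibre-bundle structure is item~4 and whose fibres give item~5. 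The associated shadowing estimate $\|H_c^n(x,y)-H_c^n(0,y_0)\|\leq D(y_0,c)\,\mu(y_0,c)^n$ is item~8; in particular the even and odd iterates converge to $(0,y_0)$ and $(0,-y_0)$ respectively, which is the convergence asserted in item~3 and pictured in Figure~\ref{figure:family}. Continuity of $\varphi,\mu,D$ in $(y_0,c)\in V$ is the uniform-in-parameters part of the theorem.

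The remaining items follow from symmetries already present in the configuration. For item~6 the functorial property of strong-stable manifolds gives $H_c(\Gamma_c(y_0))=H_c\bigl(W^{ss}(0,y_0)\bigr)=W^{ss}(H_c(0,y_0))=\Gamma_c(-y_0)$, whence $H_c^2(\Gamma_c(y_0))\subset\Gamma_c(y_0)$ and $\Gamma_c(0)$ is $H_c$-invariant; this is precisely how the solutions of the functional equation~\ref{equation:functional} reappear. For item~9 the whole construction is symmetric under reflection in the line $RS$, that is under $x\mapsto -x$, and $H_c$ commutes with this reflection; by the uniqueness in item~5 each leaf is carried to itself, so $F(-x,y_0,c)=F(x,y_0,c)$ is even in $x$ and therefore $\partial_x F(x,y_0,c)|_{x=0}=0$, i.e.\ every $\Gamma_c(y_0)$ is tangent to the $x$-axis.

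The real-analyticity claimed in items~4 and~5 is the delicate point, and it is where I expect the main obstacle to lie. The soft dynamical argument of~\cite{HPS77} delivers only finite (or at best $C^\infty$) smoothness of the foliation. To upgrade it to real-analytic one must invoke an analytic invariant-manifold theorem, exploiting that $G_c$ is given by \emph{algebraic} formulas and hence extends holomorphically, together with the fact that the normal contraction dominates to all orders. Obtaining this analytic regularity \emph{uniformly} in the parameters $(y_0,c)$, rather than the merely smooth conclusion of the dynamical argument, is the part that requires genuine work, and it is precisely here that the detailed estimates of the \emph{Inventiones} article~\cite{Rychlik97} are needed.
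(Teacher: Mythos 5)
Your proposal follows essentially the same route as the paper's own outline: recognize the invariant segment as a normally hyperbolic invariant manifold (neutral tangential behaviour, contracting normal behaviour, with \(H_c\) chosen as \(G_c\) or \(G_c^{-1}\) precisely to make the normal direction contracting), invoke the Hirsch--Pugh--Shub theory \cite{HPS77} to produce the stable foliation \(\{\Gamma_c(y_0)\}\), the projection \(\pi_c\), and the exponential estimates, and defer the real-analyticity upgrade to separate arguments (the paper cites the \emph{Inventiones} article \cite{Rychlik97} and Wirsing \cite{Wirsing58} for exactly this point). The only slip is your claim that each single iterate of \(H_c\) contracts normally by \(\mu(y_0,c)<1\): since \((0,y_0)\) is a period-two point, the one-step normal factor \(\frac{1}{c-y_0}-1\) can exceed \(1\) in absolute value, and the contraction must be measured over the period-two orbit, which is why the paper computes the two-iteration multiplier \(\mu=\frac{(1-c)^2-y^2}{c^2-y^2}\); this does not affect the validity of your argument, as normal hyperbolicity is an asymptotic condition along orbits.
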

An outline of the proof is a subject of another section. Let us note
that the curve \(\Gamma(0,c)\) and the corresponding equation
\(y=F(x,0,c)\) solve Helfenstein's functional
equation~\ref{equation:functional}.
\section{Consequences of local existence and uniqueness}
We shall start with perhaps the least understood consequence
of Theorem~\ref{theorem:main}: the non-existence of non-differentiable,
continuous 2e-curves. We will show only {\em local differentiability}
here, near points \(A\) and \(B\). 
\begin{proposition}
  \label{proposition:structure}
  Let \(c\in(0,1/2)\cup(1/2,c)\).  Let \(K\subseteq \reals^2\) be a
  closed subset contained in \(U_c\), the domain of \(G_c\), such that
  \(G_c(K) = K\). Moreover, let \(S\notin K\).  Let \(\varphi\),
  \(\mathcal{J}_\varphi(c)\) and \(\pi_c\) be given by
  Theorem~\ref{theorem:main}. Then
  \[\mathcal{J}_\varphi(c)\cap K \subseteq \bigcup_{E\in V\cap K}\pi_c^{-1}(E).\]
\end{proposition}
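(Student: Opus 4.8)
The plan is to read this proposition as a soft, purely dynamical corollary of Theorem~\ref{theorem:main}, requiring essentially no new analysis: all the work has already been done in constructing \(\varphi\), \(\mathcal{J}_\varphi(c)\) and the projection \(\pi_c\). The statement to prove amounts to
\[ \pi_c\bigl(\mathcal{J}_\varphi(c)\cap K\bigr)\subseteq V_c\cap K, \]
because \(\pi_c\) already maps into \(V_c\) by Theorem~\ref{theorem:main}, and every point \(p\) lies in its own fiber \(\pi_c^{-1}(\pi_c(p))\). Thus, taking \(E=\pi_c(p)\), the membership \(p\in\pi_c^{-1}(E)\) with \(E\in V_c\cap K\) is exactly the claimed inclusion. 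So I would reduce everything to showing that \(\pi_c(p)\in K\) for each \(p\in\mathcal{J}_\varphi(c)\cap K\).

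First I would transfer the hypothesis \(G_c(K)=K\) to the map \(H_c\) that appears in the theorem. Since \(G_c:U_c\to U_{1-c}\) is a diffeomorphism with inverse \(G_{1-c}\) (Proposition~\ref{proposition:inverse}), the equality \(G_c(K)=K\) forces \(K\subseteq U_c\cap U_{1-c}\) and yields \(G_{1-c}(K)=G_c^{-1}(K)=K\). Hence in both regimes, \(c>1/2\) with \(H_c=G_c\) and \(c<1/2\) with \(H_c=G_{1-c}\), we get \(H_c(K)=K\), so \(H_c^n(p)\in K\) for all \(n\ge 0\). Next, fix \(p\in\mathcal{J}_\varphi(c)\cap K\). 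Theorem~\ref{theorem:main} guarantees that the forward iterates are trapped, \(H_c\bigl(\mathcal{J}_\varphi(c)\bigr)\subseteq\mathcal{J}_\varphi(c)\), so the orbit \(\{H_c^n(p)\}_{n\ge 0}\) stays in \(\mathcal{J}_\varphi(c)\) and its limit \(\pi_c(p)=\lim_{n\to\infty}H_c^n(p)\) exists and lies in \(V_c\). Finally, since \(K\) is closed and the orbit is a sequence in \(K\) converging to \(\pi_c(p)\), I conclude \(\pi_c(p)\in K\), hence \(\pi_c(p)\in V_c\cap K\), completing the argument.

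I do not anticipate a genuine obstacle; the difficulty is entirely packaged inside Theorem~\ref{theorem:main}, and what remains is bookkeeping. The two points demanding care are keeping the cases \(c>1/2\) and \(c<1/2\) straight, so that \(H_c\)-invariance of \(K\) is correctly inherited from \(G_c\)-invariance via Proposition~\ref{proposition:inverse}, and observing that the inclusion is vacuous when \(\mathcal{J}_\varphi(c)\cap K=\emptyset\). The conceptual heart, worth stating explicitly, is the standard stable-manifold principle: the forward-limit projection \(\pi_c\) is continuous and collapses each fiber to a point, so it must send any closed forward-invariant set into itself. The hypothesis \(S\notin K\) plays no active role beyond keeping \(K\) off the singularity of \(G_c\); indeed it is automatic, since \(U_c\) is the punctured disk.
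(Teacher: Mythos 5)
Your proof is correct and follows essentially the same route as the paper's: fix \(p\in\mathcal{J}_\varphi(c)\cap K\), use Theorem~\ref{theorem:main} to obtain the limit \(E=\pi_c(p)=\lim_{n\to\infty}H_c^n(p)\in V_c\), and use closedness plus invariance of \(K\) to conclude \(E\in K\), hence \(p\in\pi_c^{-1}(E)\). The only differences are cosmetic: you make explicit (via Proposition~\ref{proposition:inverse}) why \(G_c(K)=K\) implies \(H_c(K)=K\), a step the paper leaves implicit, and you omit the paper's preliminary remark that the hypotheses force \(K\) to be a compact subset of \(U_c\), which the paper states but does not actually use in deducing the main inclusion.
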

\begin{proof}
  The assumption \(S\notin K\) implies that \(K\) is a compact subset
  of \(U_c\). Indeed, otherwise it would either have \(S\) in it,
  or contain a sequence \(Q_n\in K\) converging to the circular portion of the boundary of \(U_c\).
  Invariance implies that \(G(Q_n)\) converges to \(S\), so \(S\in K\), 
  which contradicts our assumptions.

  To prove the main claim, let us consider a point
  \(Q\in\mathcal{J}_\varphi(c)\cap K \).  Let us consider the limit
  \(E=\lim_{n\to\infty}H_c^n(Q)\) which exists by
  Theorem~\ref{theorem:main}. The set \(K\) is closed and invariant
  under \(H_c\), and thus \(E\in K\).  Hence, \(Q\in\pi_c^{-1}(E)\).
\end{proof}
Proposition~\ref{proposition:structure} can be applied to a
hypothetical 2e-curve \(C\) of \(G_c\). Since \(C\) must intersects the line
\(RS\) at exactly two points \(A\) and \(B\) then \(K\) must be contained in the
union of two analytic Jordan arcs \(\pi^{-1}(\{A,B\})\). So, \(C\) is automatically
\emph{locally analytic} near points \(A\) and \(B\).

Proposition~\ref{proposition:structure} may also be used to deduce an
interesting property of the real (not-hypothetical!)  curves depicted
schematically in Figure~\ref{figure:homoclinic}.  It depicts two
\(G_c\)-invariant curves which are reflections of each other and
widely oscillate near one of the points \(A\) or \(B\). Let \(C\) be
one of these curves. The curve is an {\em immersed} copy of
\(\reals\), but it is not a submanifold of \(\reals^2\) and is not
closed. Nevertheless, we may apply
Proposition~\ref{proposition:structure} to the closure of \(C\). We
obtain the following result:
\begin{proposition}
  If \(C\) is an invariant set of \(G_c\) which is a submanifold
  disjoint from the set \(x=0\), and let \(\overline{C}\) be its closure.  Then either
  \(\overline{C}\) is an analytic curve or contains an
  interval which is a subset of the line \(x=0\).
\end{proposition}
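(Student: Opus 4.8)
The plan is to apply Proposition~\ref{proposition:structure} to the closed invariant set $K=\overline{C}$ and then run a dichotomy on how $\overline{C}$ meets the line $x=0$. First I would check the hypothesis $S\notin\overline{C}$; this holds once $C$ stays in a part of $U_c$ bounded away from the center $S$, which is the situation of Figure~\ref{figure:homoclinic} (the oscillations accumulate near $B$, not near $S$). As in the proof of Proposition~\ref{proposition:structure}, $S\notin\overline{C}$ forces $\overline{C}$ to be a compact subset of $U_c$, so that $G_c$ and its inverse $G_{1-c}$ (Proposition~\ref{proposition:inverse}) are defined and continuous on it; passing to closures in $G_c(C)=C$ then gives $G_c(\overline{C})=\overline{C}$. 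With these in hand Proposition~\ref{proposition:structure} yields
\[ \overline{C}\cap\mathcal{J}_\varphi(c)\subseteq \bigcup_{E\in V_c\cap\overline{C}}\pi_c^{-1}(E), \]
so near the line $x=0$ the set $\overline{C}$ is confined to the analytic fibers $\Gamma_c(y_0)=\pi_c^{-1}(0,y_0)$ over the points of $A:=V_c\cap\overline{C}$, a closed subset of the open interval $V_c$ parametrized by $y_0$.

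Next I would split on whether $A$ contains a nondegenerate interval. If it does, say $I\subseteq A$, then $I\subseteq\overline{C}$ and $I\subseteq V_c\subseteq\{x=0\}$, so $\overline{C}$ contains an interval lying in the line $x=0$; this is exactly the second alternative, and that branch is finished. It is the \emph{oscillation} branch: the closure of a topologist's-sine-curve-type arc whose humps sweep out a whole segment of $RS$, matching the ``fixed positive amplitude'' behaviour described after Figure~\ref{figure:homoclinic}.

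The remaining case is that $A$ has empty interior in $V_c$, and here I would argue analyticity. Each connected component $\gamma$ of $C\cap\mathcal{J}_\varphi(c)$ is carried by the continuous map $\pi_c$ into $A$; since a connected subset of a set with empty interior in $\reals$ is a single point, $\pi_c(\gamma)$ is one point $(0,y_0)$, whence $\gamma\subseteq\Gamma_c(y_0)$, and in fact $\gamma$ lies in one of the two half-fibers because $C$ avoids $x=0$. Thus near the line $C$ coincides with pieces of the analytic curves $\Gamma_c(y_0)$. To globalize, I would use that $H_c$ attracts $U_c$ toward the segment $V_c$, so that the forward orbit of every point of the compact set $\overline{C}$ eventually enters (and, by part~2 of Theorem~\ref{theorem:main}, then stays in) $\mathcal{J}_\varphi(c)$; by invariance this gives $C=\bigcup_{n\ge 0}H_c^{-n}\!\left(C\cap\mathcal{J}_\varphi(c)\right)$. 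Since $H_c$ is real-analytic and the near-line piece sits inside analytic fibers, every iterate is analytic and $\overline{C}$ is an analytic curve, the first alternative.

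The main obstacle is precisely the empty-interior case, where I must exclude a genuine pathology: a priori $C$ could accumulate on $\{x=0\}$ through infinitely many distinct fibers $\Gamma_c(y_k)$ with $y_k\to y_0$, for which $A$ is a convergent sequence together with its limit---nowhere dense, yet with $\overline{C}$ neither analytic nor containing an interval. Ruling this out is the real content, and it is where the hypothesis that $C$ is a connected submanifold, together with the quantitative dynamics of \cite{Rychlik97} (the uniform exponential contraction of part~8 of Theorem~\ref{theorem:main} and the analytic triviality of the bundle $\pi_c$), must be brought to bear: one must show that a connected invariant curve oscillating across infinitely many fibers is forced, by the intermediate-value behaviour of $\pi_c$ along $C$, to have its fiber values fill an interval, so that $A$ would in fact contain an interval and we fall back into the second case. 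Making this ``converges versus genuinely oscillates, with nothing in between'' dichotomy airtight---and verifying $S\notin\overline{C}$ in full generality---is the crux of the argument.
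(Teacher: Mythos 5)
Your skeleton coincides with the paper's: the paper gives this proposition no proof at all beyond the sentence introducing it---``we may apply Proposition~\ref{proposition:structure} to the closure of \(C\)''---so taking \(K=\overline{C}\), invoking Proposition~\ref{proposition:structure}, and running the dichotomy on \(A=V_c\cap\overline{C}\) is exactly the intended route, and you work it out in more detail than the paper itself does. The difficulties you flag at the end are genuine, but note that the paper does not resolve them either: the paragraph immediately following the proposition concedes that passing from the local fiber structure near the line \(RS\) to any global conclusion needs ``a global proof,'' namely Theorems~3 and~4 of \cite{Rychlik97} (that \(G_c^{\pm 1}\) is defined on all of \(C\), and that every orbit of a point of \(C\) converges to \(A\) or \(B\)). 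Your globalization step should cite those facts rather than the claim you actually make, which is false as stated: \(H_c\) does \emph{not} attract all of \(U_c\) toward \(V_c\). Theorem~\ref{theorem:main} controls only \(\mathcal{J}_\varphi(c)\), and by Proposition~\ref{proposition:inverse} the map \(G_c\) sends \(U_c\) onto \(U_{1-c}\), a punctured disk centered at \(R\), so the orbit of a general point of \(U_c\) can leave the domain of \(G_c\) after finitely many steps; for points of the invariant set the orbit stays defined, but its convergence to the line is precisely the deferred global fact, not a consequence of Theorem~\ref{theorem:main}.

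Two further points on your case analysis. Your proposed rescue of the accumulation case by ``intermediate-value behaviour of \(\pi_c\) along \(C\)'' cannot run as described: \(\pi_c\) is defined only on \(\mathcal{J}_\varphi(c)\), and in the problematic case every component of \(C\cap\mathcal{J}_\varphi(c)\) has \emph{constant} \(\pi_c\)-value while the arcs of \(C\) joining distinct components travel outside \(\mathcal{J}_\varphi(c)\), where \(\pi_c\) is undefined; there is no continuous function along \(C\) to which an intermediate-value argument applies. What is true, and worth isolating, is this: for \(Q\in C\cap\mathcal{J}_\varphi(c)\) the iterates \(H_c^n(Q)\) lie in \(C\) by invariance, hence \(\pi_c(Q)=\lim_{n}H_c^n(Q)\in\overline{C}\); therefore \(\pi_c\) maps each component of \(C\cap\mathcal{J}_\varphi(c)\) into \(A\), and if a single component meets two distinct fibers, its connected \(\pi_c\)-image is already a nondegenerate interval inside \(A\), landing you in the second alternative. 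The genuinely unresolved case is exactly the one you name---every component in one fiber, infinitely many distinct fibers, nowhere dense value set---and ruling it out, like verifying \(S\notin\overline{C}\) and \(\overline{C}\subset U_c\) (silent hypotheses of Proposition~\ref{proposition:structure} that the statement of this proposition does not grant), requires the global dynamics of \cite{Rychlik97}. So your assessment of where the real content lies is correct; just be aware that the paper leaves the same holes and closes them only by citation, implicitly restricting to the situation of Figure~\ref{figure:homoclinic}.
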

The interpretation of this result is that either the curve can be
completed to an analytic submanifold of the plane by adding to it the
points of the line \(x=0\) which lie in its closure, or it must widely
oscillate near the line \(x=0\). This fact is interesting because there
have been many attempts at constructing a 2e-invariant curve by
starting with a point \(Q\) and its image \(Q'=G(Q)\), and then
connecting them somehow by a path. One can obtain an invariant curve
\(C\) by iterating this path. Authors of these attempts often make a
claim of continuity of their curve as it approaches the line
\(RS\). The above proposition shows that such attempts must fail to
construct a continuous solution, unless the constructed curve is
analytic near \(A\) and \(B\).  As we will point out, the
\emph{Inventiones} article~\cite{Rychlik97} proves that if \(C\) is a
2e-curve which is locally analytic near the line \(RS\) then it is
globally analytic. The article disproves the existence of analytic
2e-curve, and thus locally analytic also, and furthermore by above
proposition, continuous 2e-curves.

To prove that local differentiability of \(C\) near \(A\) and \(B\)
implies global differentiability at all points, a global proof is
required. Such a proof is given Theorems~3~and~4 in the {\em
  Inventiones} article \cite{Rychlik97}. We refer the reader to the
proofs there.  Let us just comment that to prove global
differentiability of \(C\) we need two facts. The first fact that the
map \(G_c^{\pm 1} \) is defined on and differentiable on the entire
hypothetical 2e-curve \(C\).  The second fact is that for every point
\(Q\in C\) the limit \(\lim_{n\to\infty}G^{\pm n}(Q)\) exists and is
either \(A\) or \(B\). Then the local differentiable structure near
\(A\) or \(B\) may be ``transplanted'' by a suitable iteration of
a local diffeomorphism \(G\) to any point of \(C\).

Theorem~\ref{theorem:main} and Proposition~\ref{proposition:structure}
imply that for every \(c\in(0,1)\), \(c\neq 1/2\), the function
\(f(x)=F(x,0,c)\) is the unique {\em continuous} solution of
Helfenstein's functional equation~\ref{equation:functional}, and this
solution is automatically analytic. A simple consequence of this
uniqueness is that every solution is automatically even:
\(f(x)=f(-x)\). Indeed, if \(f(x)\) solves the functional equation
then so does \(\tilde{f}(x)=f(-x)\), by inspection. Hence, in view of
uniqueness, \(\tilde{f}(x)\equiv f(x)\).

To gain a little bit more clarity, we make the following definition:
\begin{definition}
  We will call a pair of continuous Jordan arcs \((C_A,C_B\)) \emph{a
    local equichordal configuration with respect to the points \(R\)
    and \(S\)} iff:
  \begin{enumerate}
  \item The intersection of any straight line parallel to \(RS\) with
    \(C_A\) or \(C_B\) consists of at most one point.
  \item The intersection of \(C_A\) and \(C_B\) with the straight line \(RS\)
    consists of exactly one point, denoted \(A\) and \(B\) respectively.
  \item For every pair of points \(P\) and \(Q\) such that:
    \begin{enumerate}
    \item \(P\) is in \(C_A\) and \(Q\) is in \(C_B\);
    \item the points \(P\) and \(Q\) and one of the points \(R\) or
      \(S\), are collinear;
    \end{enumerate}
    the distance \(|PQ|=1\).
  \item The arcs \(C_A\) and \(C_B\) are symmetric with respect to
    the reflection in the point \(O\), the center of the segment \(RS\),
    and with respect to the axis \(RS\).
  \end{enumerate}
\end{definition}
Less formally, if we only look at the chords of \(C\) whose ends are
close to \(A\) and \(B\), points \(R\) and \(S\) apppear equichordal
for the curve \(C\). We request that a local equichordal configuration
have the symmetry properties, which would follow if a 2e-curve existed.
However, we do not require that a local equichordal configuration be
a part of a 2e-curve.

It is clear to a reader of Helfenstein's paper that he attempts to
prove non-existence of a local equichordal configuration and deduce
from it that a 2e-curve does not exist. The twist is that there exists
a local equichordal configuration, but still no 2e-curve exists.
To Helfenstein's credit, in 1956 figures like
Figure~\ref{figure:homoclinic} were uncommon, used primarily as
counterexamples in topology. The classical example is the curve
\[ y=\sin\frac{1}{x} \]
which, together with the \(y\) axis, joined somehow together to make
the figure connected, form an example of a set that is not locally
connected. Later on, the science of chaos made a discovery that curves
like this are common in studying differential equations describing
real mechanical systems. A search on the terms \emph{homoclinic
  connection} and \emph{heteroclinic connection} yield many references
to such systems. It should be noted that first examples of this kind
were known already to Poincar\'e and thus available to Helfenstein.

The reader of Helfenstein's paper can easily verify that Helfenstein
tries to prove the non-existence of \(f(x)\) described by
Theorem~\ref{theorem:main}.  Since Theorem~\ref{theorem:main} shows
that the localized version of the equichordal point problem has a
solution for all \(c\) in \(0,1\), \(c\neq 1/2\), this means that any
local non-existence argument focused on a small neighborhood of the
line \(RS\) must fail.  This is a deeper reason why the problem
had remained open for 80 years until our article \cite{Rychlik97}.
\begin{corollary}
  For every \(c\) in the range \(0<c<1\) Helfenstein's functional
  equation has a unique solution \(f(x)\) defined in a neighborhood of
  \(x=0\), and which is infinitely differentiable and for all
  \(n\). Moreover, the solution for \(c\) and \(1-c\) is the same, which
  implies that for all \(c\):
  \[ a_n(c) \equiv a_n(1-c). \]
\end{corollary}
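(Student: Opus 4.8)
The plan is to assemble the corollary from the two structural results already in hand: Theorem~\ref{theorem:main}, which furnishes a unique real-analytic local solution for each $c\neq 1/2$, and Proposition~\ref{proposition:inverse}, which identifies $G_c^{-1}$ with $G_{1-c}$. I would first dispose of the generic case $c\neq 1/2$, and then recover the degenerate value $c=1/2$ by a rational-identity argument. The point is that all the genuine analysis has been absorbed into Theorem~\ref{theorem:main}, so the corollary is essentially an exercise in invoking it correctly.

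For fixed $c\in(0,1)$ with $c\neq 1/2$, I would set $f_c(x)=F(x,0,c)$, the function produced by part~(5) of Theorem~\ref{theorem:main}. The remark following the theorem records that the curve $\Gamma_c(0)$, i.e.\ $y=F(x,0,c)$, solves Helfenstein's functional equation~\ref{equation:functional}; this supplies existence. Since part~(5) asserts that $F$ is real-analytic, $f_c$ is in particular $C^\infty$ near $x=0$, which gives infinite differentiability and legitimizes the Taylor coefficients $a_n(c)$ of~\ref{equation:coefficients}. Uniqueness among \emph{continuous} solutions is the content of Proposition~\ref{proposition:structure} applied to the graph of any continuous solution: evaluating~\ref{equation:functional} at $x=0$ forces $f(0)=0$, so the graph is a closed $G_c$-invariant set through $(0,0)$ that misses $S$, hence by the proposition it lands inside the single fibre $\pi_c^{-1}(0,0)=\Gamma_c(0)$ and must coincide with $f_c$ in a neighborhood of $x=0$.

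The heart of the matter is the invariance $f_c=f_{1-c}$. Here I would use the equivalence, stated in the text, between ``$y=f(x)$ solves~\ref{equation:functional} for parameter $c$'' and ``the graph $\{(x,f(x))\}$ is a local invariant set of $G_c$''. A set is invariant under a bijection precisely when it is invariant under the inverse bijection, and by Proposition~\ref{proposition:inverse} that inverse is $G_c^{-1}=G_{1-c}$. Consequently the graph of $f_c$ is also $G_{1-c}$-invariant, so by the same equivalence $f_c$ solves the functional equation for the parameter $1-c$ as well. Applying the uniqueness just established, now with parameter $1-c$ (whose unique continuous solution is $f_{1-c}=F(\cdot,0,1-c)$), I conclude $f_c\equiv f_{1-c}$ on a common neighborhood of $x=0$. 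Equating Taylor coefficients at $x=0$ then yields $a_n(c)=a_n(1-c)$ for every $n$ and every $c\neq 1/2$.

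Finally, the value $c=1/2$ needs no separate existence argument for the invariance claim: since $1-c=c$ there, $a_n(c)=a_n(1-c)$ holds tautologically. Moreover each $a_n(c)$ is a rational function of $c$ (it is assembled from repeated differentiation of the rational right-hand side of~\ref{equation:functional}), so the identity $a_n(c)\equiv a_n(1-c)$, already known on the dense open set $(0,1)\setminus\{1/2\}$, persists as an identity of rational functions. I expect the only real obstacle to be subsumed in Theorem~\ref{theorem:main}, whose invariant-manifold and fibre-bundle construction is what delivers a \emph{unique continuous} (not merely formal or analytic) solution. Granting that, the sole remaining subtlety is bookkeeping: one must check that the solutions for $c$ and $1-c$ are literally the same function on overlapping domains, so that uniqueness, rather than mere agreement of formal power series, is what forces $f_c=f_{1-c}$ and hence the term-by-term equality of the $a_n$.
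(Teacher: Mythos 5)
Your proposal is correct and takes essentially the same route as the paper: existence and smoothness from Theorem~\ref{theorem:main}, uniqueness of the continuous solution from Theorem~\ref{theorem:main} together with Proposition~\ref{proposition:structure}, and the exchange \(c\mapsto 1-c\) via Proposition~\ref{proposition:inverse} (\(G_c^{-1}=G_{1-c}\)), which is exactly the chain the paper assembles in the section ``More differentiability does not help'' and in the discussion surrounding the corollary. The only cosmetic difference is that the paper's paragraph immediately following the corollary phrases the \(c\leftrightarrow 1-c\) exchange as the reflection symmetry in the line through \(O\) perpendicular to \(AB\), whereas you (like the paper's earlier section) derive it from the inverse-map identity and uniqueness; these are two formulations of the same fact.
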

This corollary is the consequence of the symmetry with respect to
the straight line passing through \(O\) and perpendicular to the
line \(AB\). This symmetry exchanges the role of \(c\) and
\(1-c\). Thus, if a function \(f(x)\) solves the problem for \(c\)
then it also solves it for \(c\) replaced with \(1-c\).
Helfenstein was clearly aware of this corollary, but made
an incorrect assumption about the existence and uniqueness.
\section{Equichordal point problem solved, after all}
Although \(a_n(c)\equiv a_n(1-c)\) for all \(n\), the absence of
2e-curves is proved by different means in the \emph{Inventiones}
article \cite{Rychlik97}.  Thus, the line of reasoning used by
Helfenstein proved to be one of the numerous traps that await anyone
studing the problem. The Helfenstein's invariance condition appears to
have no significance in the \emph{Inventiones} solution.

As we have shown, the function \(f(x)\) with all the symmetry
properties, which is also a solution to the Helfenstein's
functional equation, does exist \emph{locally} in a neighborhood
of \(x=0\). It is the locality of the argument that is the main
error in Helfenstein's paper.  Only a \emph{global argument},
which takes into account the behavior of \(f(x)\) to the point of
breakdown of its properties as a \emph{single-valued} function,
can eliminate the possibility of a solution to the equichordal
point problem.

Helfenstein's argument is purely local, i.e. it does not refer to
the behavior of the function outside a small neighborhood of
zero. It is clear that the entire oval cannot be represented as a
graph of a single-valued function \(f(x)\).  For instance, the
standard unit circle is often represented as the graph of the
equation \(y=\pm \sqrt{1-x^2}\), where \(|x|<1\). However, the
right-hand side has \emph{two} possible values. Obviously, any
convex oval can be represented by a two-valued function with two
"branch points", when the two values coalesce to form a closed
curve.

Thus, it is clear that the breakdown of the representation of \(C\) as
a graph of an equation \(y=f(x)\) must occur at some point, something
that Helfenstein does not discuss. Other papers deal with this
issue. Wirsing in his 1958 article \cite{Wirsing58}, and Sh\"afke and
Volkmer in their 1992 article \cite{ShafkeVolkmer92}, represent the
curve in polar coordinates and use the equation \(r=g(\theta)\) to
represent the curve. This equation does not suffer from the limit on
the range, and is capable of capturing the solution to the
Helfenstein's equation near \(A\) and \(B\)
\emph{simultaneously}. Moreover, \(g(\theta)\equiv 1/2\) represents
the unit circle, which naturally plays a special role in the
asymptotic considerations as \(c\to 1/2\).  The Wirsing and Sh\"afke
and Volkmer research reveals the nature of the obstruction to the
existence of an oval with two equichordal points: if a solution is
well-behaved near \(A\), it must lose continuity near \(B\) and vice
versa. Hence, there is no \emph{globally defined} solution in polar
coordinates, either.
\section{An outline of the proof of Theorem~\ref{theorem:main}}
The first step of the proof it to interpret the solution of the
functional equation~\ref{equation:modified-functional} together with
its side condition:
\begin{eqnarray*}
  F\left(\frac{x}{\sqrt{x^2+(c-F(x,y_0,c))^2}} - x,-y_0,c\right)
  &=&\frac{-(c-F(x,y_0,c))}{\sqrt{x^2+(c-F(x,y_0,c))^2}} + (1-F(x,y_0,c)), \\
  y_0&=&F(x,y_0,c).
\end{eqnarray*}
as a question about invariant manifolds. We define
a map \(\mathcal{G}:U \to \reals^3\), where \(U \subset \reals^3\)
has already been defined before:
\[ U = \left\{(x,y,c)\in\reals^3\,:\, 0<c<1,\;0<x^2+(y-c)^2<1 \right\}. \]
defined by the formula \(\mathcal{G}(x,y,c) = (\xi,\eta,c) \) where
\begin{eqnarray*}
  \xi  &=& \frac{x}{\sqrt{x^2+(c-y)^2}} - x \\
  \eta &=& \frac{-(c-y)}{\sqrt{x^2+(c-y)^2}} + (1-y).
\end{eqnarray*}
Also see equations~\ref{equation:helfenstein-xi}-\ref{equation:helfenstein-eta}.
It is also true that \(\mathcal{G}(x,y,c)=(G_c(x,y),c)\), using
our prior notation, i.e. it simply extends \(G_c\) to three dimensions
by adding a trivial action on the parameter \(c\).

It can be seen that the surface \(W\) given by the equation
\(y=F(x,y_0,c)\) is locally invariant under the mapping
\(\mathcal{G}\) iff \(F(x,y_0,c)\) solves its functional equation,
i.e. \(\mathcal{G}(W\cap U)\subseteq W\). Because \(|y|<\min(c,1-c)\), we do not
have to worry about issues such as non-differentiability of
\(\mathcal{G}(x,y,c)\) or \(\mathcal{G}^{-1}(x,y,c)\) when \(x^2+(c-y)^2 = 0\), i.e. at
the point \((0,c,c)\); it is outside of \(V\). Moreover, every point
of \(V\) is a periodic point of \(\mathcal{G}\) of period 2, because it is easy
to see that:
\[ \mathcal{G}(0,y,c) = (0,-y,c). \]
Thus, the point \((0,0,c)\) (corresponding to the point \(A\)
in Helfenstein's paper) is a fixed point of \(\mathcal{G}\). 

In particular, \(V\) is an \emph{invariant manifold} of dimension
2. It is a union of two open segments of a straight line. The next
step in the proof is to notice that the manifold \(V\) is
\emph{normally hyperbolic} in the sense of Hirsch, Pugh and Shub
\cite{HPS77}. Without repeating lengthy definitions, we will explain
what this means. We start with linearizing \(\mathcal{G}\) at all points of
\(V\):
\[ \mathcal{G}(u,y+v,c+w) = \mathcal{G}(0,y,c) + D\mathcal{G}(0,y,c)\cdot (u,v,w) + O(\|(u,v,w)\|^2).\] 
where \(D\mathcal{G}(0,y,c)\) is the (Fr\'echet) derivative of \(\mathcal{G}\), i.e. a
\(2\times 2\) matrix. With a little bit of work, or using Maxima
program presented in Appendix~B, we find:
\[ D\mathcal{G}(0,y,c) = \left[\begin{array}{ccc}
  -\frac{1}{y-c}-1 & 0 & 0\\
  0               &-1 &0\\
  0               &0  &1
\end{array}\right]
 \]
 It happens that \(D\mathcal{G}(0,y,c)\) is diagonal, and thus it has three
 eigenvectors: \(\mathbf{e}_1=(1,0,0)\), \(\mathbf{e_2}=(0,1,0)\) and
 \(\mathbf{e_3}=(0,0,1)\), with eigenvalues:
\begin{eqnarray*}
  \lambda_1 &=& \frac{1}{c-y}-1,\\
  \lambda_2 &=& -1,\\
  \lambda_3 &=& 1,\\
\end{eqnarray*}
respectively.

What is important is that the tangent space to \(V\), spanned by
the set \(\{\mathbf{e}_2,\mathbf{e}_3\}\), is invariant and
\(|\lambda_2|=|\lambda_3|=1\). This means that under iteration of
\(\mathcal{G}\) the manifold \(V\) does not expand or contract. It
is also important that in the normal direction to \(V\) we have
linear contraction or expansion. This can be verified by
performing two iterations of \(\mathcal{G}\), starting from the point
\((0,y,c)\). In two iterations, the direction \(\mathbf{e}_1\)
scales by the product of the first eigenvalues for
\(D\mathcal{G}(0,y,c)\) and \(D\mathcal{G}(0,-y,c)\), i.e. the multiplier
\[\mu = \left(\frac{1}{c-y}-1\right)\cdot \left(\frac{1}{c+y}-1\right) =
\frac{(1-c)^2-y^2}{c^2-y^2} \]
In particular, for \(|y|<\sqrt{\min(c,1-c)}\) the multiplier
\(\mu\) is positive. If \(0< c < 1/2\), the numerator is
larger than the denominator and \(\mu < 1\). If \(1/2< c < 1\)
then the numerator is smaller than the denominator, and
\(\mu>1\). Of course, when \(c=1/2\) then \(\mu=1\).

Let us consider the partition \(V=V^+ \cup V^-\), where
\begin{eqnarray*}
  V^+ &= V\cap\left\{(x,y,c)\,:\,c>\frac{1}{2}\right\}, \\
  V^- &= V\cap\left\{(x,y,c)\,:\,c<\frac{1}{2}\right\}.
\end{eqnarray*}
The manifolds  \(V_+\) and \(V_-\) are both invariant and
satisfy the definition of \emph{normal hyperbolicity},
i.e. under the linearization the normal direction (which
in our case is \(\mathbf{e}_1\)) is scaled by a multiplier
\(\mu \neq 1\). The multiplier \(\mu\) may depend
on the point of \(V_\pm\) (and it does!), but it 
may not approach \(1\) along the trajectory of a point,
i.e. along the set
\[ \{(0,y,c),\,(\mathcal{G}(0,y,c),\,\mathcal{G}(\mathcal{G}(0,y,c)),\,\mathcal{G}(\mathcal{G}(\mathcal{G}(0,y,c))),\,\ldots\}.\]
Since in our case this set consists of two points:
\[ \{(0,y,c),\,(0,-y,c)) \}.\]
Thus, in our case of normal hyperbolicity is easy to verify.

The theory of normally hyperbolic invariant manifolds
immediately implies Theorem~\ref{theorem:main}. In passing, we verified
all technical assumptions necessary to apply it. Stating
the detailed results would involve a large amount of notations
and definitions, so we shall not repeat it here, and refer
an interested reader to the classical presentation \cite{HPS77}.

We note that our setup satisfied the definition of "immediate absolute
$r$-normal hypebolicity" and thus it satisfies the strongest
assumptions used in the classical monograph of Hirsch, Pugh and Shub
\cite{HPS77}.  Thus, the strongest conclusions also hold. It takes
very superficial understanding to conclude the existence of a
continuous \(F(x,c)\), infinitely differentiable over \(x\), with
derivatives continuous in \(c\). This suffices to prove \(a_n(c)\equiv
a_n(1-c)\). With better understanding, one can show joint analyticity
in \(x\) and \(c\).

It should also be noted that our presentation reflects the modern view
point, but Wirsing \cite{Wirsing58} proved a theorem in {\em
  Abschnitt~4} similar to Theorem~\ref{theorem:main}. He covers only
the analytic case, but the proof is very concise and to the point, and
even today it has some appeal despite the lack of generality.
\section{Conclusions}
By repeating the calculations in Helfenstein's paper
\cite{Helfenstein56} we identified the mistake in the paper. The
existence of the mistake was asserted by Wirsing \cite{Wirsing58}, but
he did not directly identify the place in which it occurred, which led
to half a century of confusion in regard to the validity of
Helfenstein's paper. We resolved this issue to our satisfaction. In
addition, we showed that Helfenstein's method cannot be a basis of a
new, simpler solution of the equichordal point problem than the one
currently known \cite{Rychlik97}.
\section{Notes and references}
\subsection{Theorem~\ref{theorem:main}}
Theorem~\ref{theorem:main} corresponds to Theorem~2 of the
\emph{Inventiones} article \cite{Rychlik97}. There is a difference of
notation, because in these notes we adopted Helfenstein's notation.
The symmetry properties are introduced in the text of the
\emph{Inventiones} article in the parts leading up to Theorem~2.

It should be noted that Theorem~\ref{theorem:main} is in the "easy"
part of the article, and therefore any determined reader is capable of
understanding it with minimal effort, given some familiarity with
invariant manifold theory. Today, invariant manifold theory is quite
well understood, with many excellent expositions. We still prefer the
work of Hirsch, Pugh and Shub \cite{HPS77}.

The analyticity of \(F(x,c)\) follows from an argument of
\cite{Rychlik97} used to prove Theorem~7 there.  However, the argument
is not easy to separate from a more complex situation it is
addressing. Finally, the analytic version of
Theorem~\ref{theorem:main} follows from {\em Abschnitt~4} of Wirsing's
1958 article \cite{Wirsing58}. Wirsing was not the first one to invent
the method of proof. It goes back to the works of Hadamard and Perron,
and this fact is well known today. The method is used by Hirsch, Pugh
and Shub \cite{HPS77}, and the references to Hadamard and Perron can
be found there.
\subsection{Comments by B. Gr\"unbaum}
The following comments can be found in the notes by B. Gr\"unbaum
dated 2010 included as part of not yet published revision of the book
by V.~Klee \cite{KleeRevision60}:
\begin{quote}
  But more unexpected has been the reaction to the work of Helfenstein
  [Hel56]. His rather infelicitously formulated claim is: ``We shall
  show in this paper the non-existence of real and, in one special
  point, at least six times differentiable 2e-curves.'' What his
  argumentation shows (assuming there are no errors, and nobody
  pointed out any specific errors in the paper) is that a
  contradiction is reached from the assumption of existence of a
  2e-curve together with the assumption that the curve assumed to
  exist is six times differentiable at a specific point. This would
  seem a reasonable result, and the non-existence would be established
  provided the differentiability assumption could be proved for all
  2e-curves assumed to exist. Hence one would think (and this was
  explicitly stated by S\"uss [S\"us]) that the non-existence of u u
  2e-curves would follow from Helfenstein's result together with the
  following result of Wirsing [Wir58]: If a 2e-curve exists, it must
  be analytic, that is, infinitely differentiable at all
  points. Indeed, to belabor the completely obvious, if a curve is
  infinitely differentiable at all points, then it is six times
  differentiable at a specific point --- and the contradiction reached
  by Helfenstein proves the non-existence. It is mystery to me why
  Wirsing thought that Helfenstein's work must be in error since ``it
  is contradicted by the present investigation'' (``durch die
  vorliegende Untersuchung widerlegt wird''). But an even greater
  mystery is why Klee, in all his discussions of equichordality,
  accepted Wirsing's statement as valid, and Helfenstein's result as
  invalid. Wirsing committed a logical error, and Klee and others
  uncritically accepted it a valid. The reviews of Helfenstein's paper
  in the Math. Reviews and the Zentralblatt fail to claim any errors
  in it. Not all details of Helfenstein's proof are given in [Hel56] -
  but no error has been found either.
\end{quote}
Of course, [Hel56] refers to the earlier cited article \cite{Helfenstein56}.
The Wirsing's work cited as [Wir58] is \cite{Wirsing58}.
The last citation in the above comment, [S\"uss], is \cite{Suss}.

Clearly, this inaccurate account of the state of the equichordal point
problem still lingers in the public domain.  Gr\"unbaum writes in the
introduction (\cite{KleeRevision60}, p.~i):
\begin{quote}
  About mid-May 2010 it occurred to me that it might be appropriate to
  have the fifty-years old collection made available to participants
  at the ``100 Years in Seattle'' conference.
\end{quote}

In contrast with Gr\"unbaum, we are certain that Wirsing rejected
Helfenstein's argument for the right reasons: it contradicted
Wirsing's own research. Wirsing refers to Helfenstein's work in two
places.  The main point is made in his \emph{Abschnitt~4}, in which
Wirsing proves a variant of our Theorem~\ref{theorem:main}, and at the
end he writes:
\begin{quote}
  Es bleibt die Frage, ob f\"ur irgendwelche $c$ und $X$ die
  analytische Fortsetzung von $\frak{C}_1$ and $\frak{C}_2$ zu
  geschlossen Kurven f\"uhren kann.

  \"Ubrigens steht die Tatsache, da{\ss} die Doppelspeichenbedingung
  sich wenigstens lokal (in Umgebungen von $T_1$ and $T_2$) durch
  regul\"ar analytische Kurven befriedigen l\"a{\ss}t, im Gegensatz zu
  der Arbeit von {\sc Helfenstein} [5], der durch 6-malige
  Differentiation der Funktionalgleichung bei $T_1$, $T_2$ zu einem
  Widerspruch gelangt und daraus auf die Nichtenxistenz einer 6-mal
  differenzierbaren D-Kurve schlie{\ss}t. Der Fehler d\"urfte in den
  unver\"offentlichten Rechnungen liegen.
\end{quote}
\section{Asymptotic analysis beyond all orders}
Sh\"afke and Volkmer \cite{ShafkeVolkmer92} conducted a deep
asymptotic analysis of the equichordal point problem, which implies
that a 2e-curve may exist only for a finite set of values of \(c\).
In spirit, the paper continues the researches of Wirsing. Wirsing
proved that if \(c\to 1/2\) then the hypothetical 2e-curve must be
extremely close to the circle of radius \(1/2\) centered at \(O\),
closer than \(C_\alpha\,|c-1/2|^\alpha\) where \(\alpha\) is an
arbitrarily large power, and \(C_\alpha\) is a certain constant,
depending on \(\alpha\).  Thus the name ``asymptotic analysis beyond
all orders'' which is sometimes used in reference to this kind of
result.

Sh\"afke and Volkmer quantified this statement even further,
expressing the leading asymptotics of the difference between the
2e-curve and the circle in terms of the exponential. In fact, the
paper extends the representation of the arcs \(C_A\) and \(C_B\)
in the discussion following Theorem~\ref{theorem:main} to the full angle \(\theta\)
in polar coordinates except one point (either zero or \(\pi\))
near which the arcs start rapidly oscillating, breaking
continuity!

\newpage
\appendix
\section{Maxima code showing Helfenstein's mistake}
\begin{small}
\verbatiminput{find_series.mac}
\end{small}
\newpage
\section{Maxima code finding Fr\'echet derivative of $G$}
\begin{small}
\verbatiminput{find_frechet.mac}
\end{small}
\section{Maxima code to verify formula for inverse of $G$}
\begin{small}
\verbatiminput{find_inverse.mac}
\end{small}
\end{document}